\definecolor{gris}{rgb}{.9, .9, .9}
\definecolor{bla}{rgb}{.5, .5, .5}
\newtheorem{thm}{Theorem}[section]
\newtheorem{cor}[thm]{Corollary}
\newtheorem{lem}[thm]{Lemma}
\newtheorem{prop}[thm]{Proposition}
\theoremstyle{definition}
\newtheorem{defn}[thm]{Definition}
\theoremstyle{remark}
\numberwithin{equation}{section}
\begin{document}

\title[A sharp smoothness of the conjugation]
{A sharp smoothness of the conjugation of class P-homeomorphisms to diffeomorphisms}
\author{Abdelhamid Adouani and Habib Marzougui}

\address{University of Carthage,
Faculty of Science of Bizerte, Department of Mathematics, Jarzouna. 7021. Tunisia.} \email{arbi.abdelhamid@gmail.com}
\email{hmarzoug@ictp.it; habib.marzougui@fsb.rnu.tn}

\subjclass[2000]{Primary: 37C15, 37E10, 37E45} \keywords{piecewise linear homeomorphism, class P-homeomorphism, rotation
number, conjugacy, break point, jump.}

\begin{abstract} Let $f$ be a class $P$-homeomorphism of the circle. We prove that there exists a piecewise analytic homeomorphism that conjugate $f$ to a one-class $P$ 
with prescribed break points lying on pairwise distinct orbits. As a consequence, we 
give a sharp estimate for the smoothness of a conjugation of class $P$-homeomorphism $f$ of the circle 
satisfying the (D)-property (i.e. the product of $f$-jumps in the break points contained in a same orbit is trivial), 
to diffeomorphism. When $f$ does not satisfy the (D)-property the conjugating homeomorphism is never piecewise $C^{1}$
and even more it is not absolutely continuous function if the total product of $f$-jumps in all the break points is non-trivial. 
 
\end{abstract}
\maketitle

\section{Introduction}
\bigskip

Denote by  $S^{1} =
\mathbb{R}/\mathbb{Z}$  the circle and  $p :
\mathbb{R}\longrightarrow S^{1}$ the canonical projection. Let  $f$
 be an orientation preserving homeomorphism of  $S^{1}$.  The
homeomorphism  $f$  admits a lift  $\widetilde{f} :
\mathbb{R}\longrightarrow \mathbb{R}$  that is an increasing
homeomorphism of $\mathbb{R}$  such that $p \circ \widetilde{f} = f\circ p$.
Conversely, the projection of such a homeomorphism of
 $\mathbb{R}$ is an orientation preserving homeomorphism of
 $S^{1}$. The rotation number of a homeomorphism  $f$  of  $S^{1}$ is defined
as $\rho (f) = \underset{n\to +\infty}{\lim}\frac{\widetilde{f}^{n}(x) - x}{n}~(\textrm{mod } 1), \ x\in \mathbb{R}.$ This limit exists and is independent of the choice of the point $x$
 and the lift  $\widetilde{f}$  of  $f$. For example, if 
$R_{\alpha}: x\mapsto x+ \alpha~(\textrm{mod } 1)$ is the rotation by angle $\alpha$ then it is obviously that
 $\rho(R_{\alpha}) = \alpha~(\textrm{mod } 1)$. From the definition,  $\rho(h\circ f\circ h^{-1})= \rho(f)$ holds for any orientation preserving homeomorphism $h$
of  $S^{1}$. Assuming  $f$  is a  $C^{r}$-diffeomorphism ($r\geq 2$) and  $\rho(f)$  is irrational, Denjoy (\cite{aD32}) proved 
that: every  $C^{r}$-diffeomorphism  $f$ ($r\geq 2$) of $S^{1}$ with
irrational rotation number  $\rho (f)$ is topologically conjugate to the rotation  $R_{\rho (f)}$. This means that there exists an orientation preserving homeomorphism $h$ of $S^{1}$ such that  $f = h^{-1}\circ R_{\rho(f)}\circ h$. Denjoy noted that this result
can be extended (with the same proof) to a large class of circle homeomorphisms: \emph{the class  $P$} (see \cite{H},  Chapter VI) and in particular for
piecewise linear (PL) circle homeomorphisms.
\bigskip

\begin{defn}\label{d:10}  An orientation preserving homeomorphism $f$ of $S^{1}$ is called a \emph{class  $P$}-homeomorphism if it is derivable
except at finitely many points, the so called \textit{break points} of  $f$, at which left and right derivatives (denoted, respectively, by 
 $\mathrm{Df}_{-}$ and $\mathrm{Df}_{+}$) exist and such that
the derivative  $\mathrm{Df}: S^{1} \longrightarrow \mathbb{R}_{+}^{*}$  has the following properties:
\begin{itemize}
\item [-] There exist two constants  $0<a <b<+\infty$  such that: $a<\mathrm{Df}(x)<b,$  for every  $x$  where  $\mathrm{Df}$  exists,

\item [-]  $a <\mathrm{Df}_{+}(c)< b$ and  $a<\mathrm{Df}_{-}(c)< b$  at the break points  $c$.

\item [-]  $\log \mathrm{Df}$  has bounded variation on  $S^{1}$ (i.e.
the total variation of $\log \mathrm{Df}$ is finite).
\end{itemize}
\end{defn}

We pointed out that the third condition implies the two ones. Also notice that if $f$ is a class 
$P$-homeomorphism of $S^{1}$ which is $C^{1}$ on $S^{1}$ then $f$ is a $C^{1}$-diffeomorphism of $S^{1}$. 
\bigskip

\begin{defn}\label{d:11} An orientation preserving homeomorphism $f$ of $S^{1}$ is called piecewise linear (\emph{PL-homeomorphism}) if  $f$
is derivable except at finitely many break points  $(c_{i})_{0\leq i\leq p}$ of $S^{1}$ such that the derivative $\mathrm{Df}$
is constant on each  $]c_{i}, \ c_{i+1}[$.
\end{defn}
\medskip

Among the simplest examples of class $P$-homeomorphisms, we mention:
\begin{itemize}
 \item [\textbullet] $C^{2}$-diffeomorphisms,

\item [\textbullet] Piecewise linear $\textrm{PL}$-homeomorphisms, these are not C$^{2}$-diffeomorphisms.
\end{itemize}
\medskip

Denote by
\begin{itemize}
  \item [-] Homeo$_{+}(S^{1})$ the group of orientation-preserving homeomorphisms of $S^{1}$.

  \item [-] $\mathcal{P}(S^{1})$ the set of class $P$-homeomorphisms of $S^{1}$, it is a subgroup of Homeo$_{+}(S^{1})$.

 \item [-] $\textrm{PL}(S^{1})$ the set of  $\textrm{PL}$-homeomorphisms of $S^{1}$, it is a subgroup of
 $\mathcal{P}(S^{1})$  which contains rotations.
\end{itemize}
\medskip
\medskip

In this paper, we are mainly concerned with the sharp estimate for the smoothness of a conjugation of class $P$-homeomorphism 
with the ($D$)-property (see Definition \ref{d:14} and Theorem \ref{t:23}) to diffeomorphism. For class $P$-homeomorphism 
without the ($D$)-property, the conjugation is never piecewise $C^{1}$ (see Proposition \ref{p:18}) 
and even more, can be a singular function (see Corollary \ref{c:19}).
Before stating the main result, we need the following notations and definitions.

\
\\
 For $f\in \mathcal{P}(S^{1})$ and $x\in S^{1}$, denote by  
\begin{itemize}

 \item [-] $O_{f}(x): = \{f^{n}(x): \ n\in \mathbb{Z} \}$ called the \emph{orbit} of $x$ by $f$.

\item [-] $\sigma_{f}(x): \ = \ \frac{\textrm{Df}_{-}(x)}{\textrm{Df}_{+}(x)}$  called the \emph{ $f$-jump} in $x$.\\
 
\item [-] $\pi_{s, O_{f}(c)}(f) = \underset{x\in C(f)\cap O_{f}(c)}{\prod}\sigma_{f}(x)$,
for every $c\in C(f)$.

\item [-] $C(f) = \{c_{0}, c_{1}, c_{2},\dots, c_{p}\}$  the set of break points of $f$ in
$S^{1}$. 

\item [-] $c_{p+1}: = c_{0}$.
 
\item [\textbullet] $\pi_{s}(f)$ the product of $f$-jumps at the break points of $f$:
$$\pi_{s}(f) =  \underset{c\in C(f)}{\prod}\sigma_{f}(c).$$
\end{itemize}
\medskip

\begin{defn}\label{d:14} (\cite{AM2}) Let $f\in \mathcal{P}(S^{1})$. We say that $f$ has the \textit{($D$)-property} 
if the product of $f$-jumps in the break points on each orbit is trivial; that is $\pi_{s, O_{f}(c)}(f) = 1$, for every $c\in C(f)$.
\end{defn}
\medskip

In particular, if $f$ has the ($D$)-property, then $\pi_{s}(f) = 1$. Conversely, if all break points belong to the same orbit
and $\pi_{s}(f) = 1$ then $f$ has the ($D$)-property.
We established in (\cite{AM2}, Proposition 2.5) that $f$ has the ($D$)-property if and only if the number of break points of 
$f^{n}$ is bounded by some constant that doesn't depend on 
$n$. 
\medskip
 
\begin{defn}\label{d:13}$($Maximal connections$)$. Let $f\in \mathcal{P}(S^{1})$ and $c\in C(f)$. A \emph{maximal $f$-connection} of  $c$
 is a segment $$[f^{-p}(c),\dots, f^{q}(c)]:=  \{f^{s}(d): \ -p \leq s\leq q\}$$ of the orbit $O_{f}(c)$ which
contains all the break points of $f$ contained on $O_{f}(c)$ and such that $f^{-p}(c)$ (resp. $f^{q}(c)$) is the first (resp. last)
break point of  $f$ on  $O_{f}(c)$.
\end{defn}
We have the following properties:
\
\\
- Two break points of $f$ are on the same maximal $f$-connection, if and only if, they are on the same orbit.
\
\\
- Two distinct maximal $f$-connections are disjoint.
\bigskip

\textbf{Notations}. Let  $f\in \mathcal{P}(S^{1})$. We let
\medskip

\begin{itemize}

 \item [-]  $ M_{i}(f) = [c_{i},\dots, f^{N_{i}}(c_{i})], \ N_{i}\in \mathbb{N}^{*},$ the maximal $f$-connections of
$c_{i}\in C(f)$, ($0\leq i\leq p$).\\
\item [-]  $M(f) = \coprod_{i=0}^{p}M_{i}(f)$.

\
\\
So, we have the decomposition: $C(f) = \coprod_{i=0}^{p} C_{i}(f)$, where 

$C_{i}(f) = C(f)\cap M_{i}(f), \ 0\leq i\leq p$. In particular, $C_{i}(f)\subset  M_{i}(f)$.\\

\item [-] $N:= \underset{0\leq i\leq p}\max N_{i}$.
\end{itemize}
\
\\
Note that if $f$ has the ($D$)-property then:
\
\\
\begin{itemize}
\item [-] $\underset{d\in C_{i}(f)}\prod \sigma_{f}(d) = \underset{d\in
M_{i}(f)}\prod \sigma_{f}(d) =1$, for every $i=1,\dots, p$.
\end{itemize}

\
\\
%
%

Define \\
 \ \ $\bullet$ \ $\pi_{O_{f}(c_{i})}(f): =
\prod_{k=1}^{N_{i}}\sigma_{f^{N+1}}(f^{k}(c_{i})).$
By (\cite{AM2}, Lemma 2.7), we also have: $\pi_{O_{f}(c_{i})}(f): =
\underset{j\in \mathbb{Z}}\prod \big(\sigma_{f}(f^{j}(c_{i}))\big)^{j}.$

 $\bullet$ \ $\pi(f): = \prod_{i=0}^{p}\pi_{O_{f}(c_{i})}(f).$
\bigskip

Let $\sigma\in \mathbb{R}_{+}^{*}\setminus\{1\}$. 
We shall introduce the two following basic class $P$-homeomorphisms. 
Denote by
\bigskip

$\bullet$ \ $g_{\sigma}$ the orientation preserving homeomorphism of $S^{1}$ with lift ~ $\widetilde{g_{\sigma}}: \mathbb{R}\longrightarrow \mathbb{R}$ ~ restricted to $[0,1[$ is given by: \begin{equation*}\widetilde{g}_{\sigma}(x) =
\left(\frac{1-\sigma}{1+\sigma}\right)\left(x^{2} +\frac{2\sigma}{1-\sigma}x\right), \
x\in [0, 1[. \end{equation*}
\medskip

We identify $g_{\sigma}$ with its lift ~ $\widetilde{g_{\sigma}}$. Since  $g_{\sigma}(0) = 0$, $g_{\sigma}(1) = 1$ and $\sigma\neq 1$,  $g_{\sigma}\in \mathcal{P}(S^{1})$ with one break point $0$ and such that $\sigma_{g_{\sigma}}(0) = \sigma$. Moreover, $g_{\sigma}$ is quadratic on $S^{1}\backslash \{0\}$.
\medskip

$\bullet$ \ $h_{\sigma}$ the homeomorphism of $S^{1}$ with lift $\widetilde{h_{\sigma}}: \mathbb{R}\longrightarrow \mathbb{R}$ restricted to $[0,1[$ is given by:
$$\widetilde{h_{\sigma}}(x) = \frac{\sigma^{x}-1}{\sigma-1}, \ x\in [0, 1[.$$ We identify $h_{\sigma}$ with its lift $\widetilde{h_{\sigma}}$. Then  $h_{\sigma}\in \mathcal{P}(S^{1})$ with one break point $0$ and such that  $\sigma_{h_{\sigma}}(0) = \sigma$. Moreover, $h_{\sigma}$ is analytic on $S^{1} \backslash  \{0\}$.

\bigskip

\begin{defn} A homeomorphism $h$ of $S^{1}$ is called a $\mathrm{PQ}$-homeomorphism (resp. $\mathrm{PE}$-homeomorphism) of $S^{1}$ if 
$h = L\circ u$, where $L\in \mathrm{PL}(S^{1})$ 
and $u = R_c\circ g_{\sigma}\circ R_c^{-1}$ (resp. $R_c\circ h_{\sigma}\circ R_c^{-1}$), for some  
$\sigma\in \mathbb{R}_{+}^{*}\setminus\{1\}$ and $c\in S^{1}$.
\end{defn}  
\
\\
We are in the position to give our main result.
\bigskip

\begin{thm}\label{t:23} Let $f\in \mathcal{P}(S^{1})$ with the ($D$)-property and irrational 
rotation number. Then:
\begin{enumerate}
 \item [(i)]  If $\pi(f)\neq 1$, $f$ is conjugate to a diffeomorphism through a 
 $\mathrm{PQ}$(resp. $\mathrm{PE}$)-homeomorphism (but not $\mathrm{PL}$-homeomorphism).
 \item [(ii)] If $\pi(f) = 1$, $f$ is conjugate to a diffeomorphism through a $\mathrm{PL}$-homeomorphism.
\end{enumerate}
\end{thm}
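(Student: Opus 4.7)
The plan is to combine the preliminary piecewise analytic conjugation announced in the abstract with an explicit smoothing step to build the conjugating homeomorphism with the required PL/PQ/PE structure. The role of the (D)-property is to allow the multiple jumps of $f$ to be successively collapsed until at most one residual jump remains, whose value is exactly $\pi(f)$; the model blocks $g_\sigma$ and $h_\sigma$ are then exactly the tools needed to absorb that one remaining jump.

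\medskip

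First, I would invoke the preliminary theorem to produce a piecewise analytic $\phi_0$ with $\tilde f := \phi_0\, f\, \phi_0^{-1} \in \mathcal{P}(S^1)$ having its break points on pairwise distinct orbits. A chain-rule computation, combined with the (D)-hypothesis $\prod_{d\in C_i(f)}\sigma_f(d)=1$ on each orbit together with the formula $\pi_{O_f(c_i)}(f)=\prod_{j}\sigma_f(f^{j}(c_i))^{j}$, would show that the $\tilde f$-jumps at the surviving breaks have total product equal to $\prod_i \pi_{O_f(c_i)}(f) = \pi(f)$. A further PL conjugation $L_0$ can then be used to redistribute these jumps across the (now disjoint) orbits so as to consolidate them into a single break with jump $\pi(f)$ in case (i), or into no residual break at all in case (ii), when $\pi(f)=1$.

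\medskip

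In case (ii), the full conjugation $L_0 \circ \phi_0$ can be arranged to be PL and the target is already a $C^{1}$-diffeomorphism, giving the PL conjugation of (ii). In case (i), placing the single remaining break at a point $c$, I would conjugate by the model block $u = R_c \circ g_{\pi(f)} \circ R_c^{-1}$ (resp.\ $R_c \circ h_{\pi(f)} \circ R_c^{-1}$), which by definition has jump $\pi(f)$ at $c$ and hence cancels the last break; the resulting map is a diffeomorphism, quadratic (resp.\ analytic) outside the finite set of images of the PL breaks. The total conjugating homeomorphism then has the form $L \circ u$ with $L$ PL and $u$ a single quadratic (resp.\ exponential) block, which is precisely a PQ- (resp.\ PE-) homeomorphism. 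For the parenthetical negative statement in (i), PL-conjugation preserves the invariant $\pi$, so if the conjugating map were PL the conjugate would inherit $\pi = \pi(f) \neq 1$, contradicting that a $C^{1}$-diffeomorphism has $\pi = 1$. The main obstacle I expect is in the reduction step: verifying that, under (D), the breaks of $\tilde f$ on pairwise distinct orbits can genuinely be consolidated by a PL conjugation into a single break with jump \emph{exactly} $\pi(f)$, rather than some other combinatorial expression in the $\sigma_f(c_i)$.
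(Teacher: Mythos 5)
Your plan goes wrong at two related points, both traceable to a conflation of the two invariants $\pi_s(f)=\prod_{c\in C(f)}\sigma_f(c)$ and $\pi(f)=\prod_i\prod_j\bigl(\sigma_f(f^j(c_i))\bigr)^j$. First, after the reduction to break points on pairwise distinct orbits, the residual jump attached to the orbit of $c_i$ is $\pi_{s,O_f(c_i)}(f)$, and under the (D)-property this equals $1$ for every $i$: the reduced map has \emph{no} surviving break points and is already a diffeomorphism. The total product of whatever jumps survive is $\pi_s(f)$ (a piecewise-$C^1$-conjugacy invariant, equal to $1$ under (D)), not $\pi(f)$. Second, and fatally, your final step --- conjugating a map with a single residual break of jump $\pi(f)\neq 1$ by $u=R_c\circ g_{\pi(f)}\circ R_c^{-1}$ in order to ``cancel the last break'' --- cannot work. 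The jump formula $\sigma_{uFu^{-1}}(u(x))=\sigma_u(F(x))\,\sigma_F(x)/\sigma_u(x)$ shows that the break at $u(c)$ is killed but a new break of jump $\pi(f)$ appears at $u(F^{-1}(c))$: the break is displaced, not removed. No piecewise $C^1$ conjugacy can remove it, since $\pi_s$ is invariant and any diffeomorphism has $\pi_s=1$; this is exactly the obstruction behind Proposition \ref{p:18} and Corollary \ref{c:19}.

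The actual role of $\pi(f)$ is different from the one you assign it. In Theorem \ref{t:623} the conjugating map is required to carry prescribed jumps $\sigma_{k,i}(f)$ at the points $f^k(c_i)$, and Lemma \ref{l:22} can realize a prescribed family of jumps by a PL circle homeomorphism only when their total product is $1$. That total product is $\sigma(f)$, which under (D) equals $\pi(f)$ (Lemma \ref{l31}). When $\sigma(f)\neq 1$, the quadratic (resp.\ exponential) block $u$ with $\sigma_u(c)=\sigma(f)$ is inserted \emph{into the conjugating homeomorphism} to absorb this deficit before the PL lemma is applied; it is not a post-hoc correction applied to the conjugated map. Your reduce-then-consolidate outline is close in spirit to the paper's two-stage construction, and your argument for the parenthetical ``not PL'' claim is in the right direction, but the key quantitative step --- identifying what the residual jumps are and where the non-PL block must enter --- is wrong as written, so the argument does not close.
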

\medskip

In particular, for $\textrm{PL}$-homeomorphism, we obtain:
\medskip

\begin{cor}\label{c:15} Let $f\in \mathrm{PL}(S^{1})$ with the ($D$)-property and irrational
rotation number $\alpha$. Assume that $\pi(f)=1$. Then
$f$ is conjugate to the rotation $R_{\alpha}$ through a $\mathrm{PL}$-homeomorphism.
\end{cor}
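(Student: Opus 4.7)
The plan is to apply Theorem \ref{t:23}(ii) directly and then exploit a rigidity property of $\mathrm{PL}$-diffeomorphisms of the circle. Since $f \in \mathrm{PL}(S^{1}) \subset \mathcal{P}(S^{1})$ has the $(D)$-property and satisfies $\pi(f) = 1$, Theorem \ref{t:23}(ii) produces an $h \in \mathrm{PL}(S^{1})$ such that $g := h \circ f \circ h^{-1}$ is a $C^{1}$-diffeomorphism of $S^{1}$.

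The key step is to observe that $g$ is simultaneously $\mathrm{PL}$ and smooth. Because $\mathrm{PL}(S^{1})$ is a subgroup of $\mathrm{Homeo}_{+}(S^{1})$ (as recorded in the preliminaries), the conjugate $g = h \circ f \circ h^{-1}$ is itself a $\mathrm{PL}$-homeomorphism. Being also a $C^{1}$-diffeomorphism, $g$ has no break points, so $\mathrm{D}g$ is continuous, and in view of piecewise linearity it must be constant. Lifting to $\mathbb{R}$ and using the commutation relation $\tilde{g}(x+1) = \tilde{g}(x) + 1$, this constant slope is forced to equal $1$. Hence $\tilde{g}(x) = x + b$ for some $b \in \mathbb{R}$, i.e., $g$ is a rotation.

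Finally, since conjugation preserves the rotation number, $\rho(g) = \rho(f) = \alpha$, which forces $g = R_{\alpha}$, and this gives the desired $\mathrm{PL}$-conjugacy $h \circ f \circ h^{-1} = R_{\alpha}$. The main obstacle is absorbed into Theorem \ref{t:23}(ii): once the $\mathrm{PL}$-conjugator to a diffeomorphism is available, the corollary reduces to the elementary rigidity fact that a $\mathrm{PL}$ circle homeomorphism with continuous derivative must be a rotation.
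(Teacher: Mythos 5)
Your proof is correct and follows the route the paper intends: the corollary is stated as an immediate consequence of Theorem \ref{t:23}(ii), and your argument supplies exactly the implicit details (the conjugate $h\circ f\circ h^{-1}$ is again $\mathrm{PL}$, a $\mathrm{PL}$-homeomorphism with continuous derivative has constant slope equal to $1$ by the lift relation, hence is a rotation, and invariance of the rotation number pins it down to $R_{\alpha}$). No gaps.
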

\bigskip

When $f$ does not satisfy the ($D$)-property, there is no rigidity; the conjugating homeomorphism is never piecewise $C^{1}$. 
\medskip

\begin{prop}\label{p:18}
Let $f\in \mathcal{P}(S^{1})$ with irrational 
rotation number. If $f$ does not satisfy the ($D$)-property, then it is not conjugate to a diffeomorphism through a piecewise 
$C^{1}$-homeomorphism of $S^{1}$. 
\end{prop}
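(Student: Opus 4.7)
The plan is to argue by contradiction. Suppose $h$ is a piecewise $C^{1}$-homeomorphism with $g:=h\circ f\circ h^{-1}$ a $C^{1}$-diffeomorphism. The key tool is the cocycle identity for the one-sided derivative jumps $\sigma$. Since $f$ is of class $P$ and $h$ is piecewise $C^{1}$, both possess positive left and right derivatives at every point; since $g$ is $C^{1}$, we have $\sigma_{g}\equiv 1$. Taking one-sided derivatives in $g\circ h=h\circ f$ (using that $f$ and $h$ are orientation-preserving, so ``left/right'' is preserved under composition) and dividing the minus version by the plus version yields the pointwise identity
\begin{equation*}
\sigma_{h}(f(x))\,\sigma_{f}(x)=\sigma_{h}(x),\qquad x\in S^{1},
\end{equation*}
whose iterate reads $\sigma_{h}(f^{N}(x))=\sigma_{h}(x)\big/\prod_{j=0}^{N-1}\sigma_{f}(f^{j}(x))$, in which only the factors at $f^{j}(x)\in C(f)$ are nontrivial.

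The rest exploits the hypothesis $\pi_{s,O_{f}(c)}(f)\neq 1$ for some $c\in C(f)$. Enumerate $C(f)\cap O_{f}(c)=\{f^{a_{1}}(c),\dots,f^{a_{k}}(c)\}$ with $a_{1}<\dots<a_{k}$ and set $P:=\pi_{s,O_{f}(c)}(f)=\prod_{i=1}^{k}\sigma_{f}(f^{a_{i}}(c))\neq 1$. Irrationality of $\rho(f)$ ensures that the full orbit $\{f^{n}(c)\}_{n\in\mathbb{Z}}$ is infinite with pairwise distinct iterates, while $h$ has only finitely many break points; so I can pick $a_{0}<a_{1}$ and $a_{k+1}>a_{k}$ with $f^{a_{0}}(c)$ and $f^{a_{k+1}}(c)$ both outside the break set of $h$, i.e.\ with $\sigma_{h}(f^{a_{0}}(c))=\sigma_{h}(f^{a_{k+1}}(c))=1$. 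Applying the iterated identity with $x=f^{a_{0}}(c)$ and $N=a_{k+1}-a_{0}$ then gives
\begin{equation*}
1=\sigma_{h}\bigl(f^{a_{k+1}}(c)\bigr)=\frac{1}{\prod_{i=1}^{k}\sigma_{f}(f^{a_{i}}(c))}=\frac{1}{P},
\end{equation*}
forcing $P=1$, a contradiction.

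The only delicate point I foresee is justifying that the cocycle identity holds as a genuine pointwise equality at every $x$, including at break points of $f$ or of $h$. This is secured by the fact that $f$ (class $P$) and $h$ (piecewise $C^{1}$) both admit \emph{both} one-sided derivatives at every point, and that these one-sided derivatives compose correctly because all maps are monotone. Once this is in hand, the argument is essentially bookkeeping: the intermediate values of $\sigma_{h}$ (which could themselves be nontrivial if some interior iterate $f^{j}(x)$ happens to lie in the break set of $h$) telescope away, and only the two endpoint values of $\sigma_{h}$, chosen to equal $1$, survive to produce the contradiction. Irrationality of $\rho(f)$ is used only in the mild form that the orbit of $c$ is infinite, which is exactly what lets us avoid the break set of $h$ at the two endpoints.
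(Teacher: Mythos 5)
Your argument is correct, but it takes a genuinely different route from the paper. The paper's proof is a two-line reduction to an external result: it cites \cite{AM2}, Proposition 2.5 (the ($D$)-property holds if and only if the number of break points of $f^{n}$ is bounded uniformly in $n$), and observes that $f^{n}=h^{-1}\circ F^{n}\circ h$ with $F^{n}$ a diffeomorphism forces $f^{n}$ to have at most $2p$ break points, where $p$ is the number of break points of $h$. You instead run the jump cocycle $\sigma_{h}(f(x))\,\sigma_{f}(x)=\sigma_{h}(x)$ directly along a single orbit carrying a nontrivial jump product, choosing the two endpoints of the telescoping outside the finite break set of $h$; this is self-contained (no appeal to \cite{AM2}), and it isolates the actual mechanism --- the conjugacy's jump, transported along the orbit, must absorb the product of $f$-jumps, which is impossible when $\sigma_{h}\equiv 1$ off a finite set and $\pi_{s,O_{f}(c)}(f)\neq 1$. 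In effect you are reproving the relevant half of \cite{AM2}, Proposition 2.5, in the special case needed. Both proofs use irrationality only to guarantee that orbits are infinite. One small point worth making explicit: your cocycle identity presupposes that the one-sided derivatives of $h$ are everywhere positive (so that $\sigma_{h}$ is a well-defined element of $\mathbb{R}_{+}^{*}$). This is the standard reading of ``piecewise $C^{1}$-homeomorphism,'' and even if one allowed a vanishing one-sided derivative, the identity $Dg(h(x))\,Dh_{+}(x)=Dh_{+}(f(x))\,Df_{+}(x)$ shows the zero set of $Dh_{+}$ would be finite and $f$-invariant, hence empty since $f$ has no periodic points; but you should say a word about it.
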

\medskip

Actually, using a recent result du to Adouani \cite{A} and independently Dzhalilov et al. \cite{DMS},  one can say even more:

\begin{cor}\label{c:19}  Let $f\in \mathcal{P}(S^{1})$ with irrational 
rotation number. Assume that the derivatives $\textrm{Df}$ is absolutely
continuous on every continuity interval of $\textrm{Df}$. If $\pi_{s}(f)\neq 1$ then any homeomorphism map $h$ conjugating $f$ 
to a diffeomorphism of $S^{1}$ is a singular function i.e. it is continuous on $S^{1}$ and $\textrm{Dh}(x)=0$ a.e. 
with respect to the Lebesgue measure.
\end{cor}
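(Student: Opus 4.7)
\medskip\noindent\textbf{Proof plan.} My strategy is to pass through the unique $f$-invariant probability measure: the cited results will supply that this measure is singular with respect to Lebesgue, and I will then transfer that singularity to the conjugating homeomorphism $h$ by a push-forward and null-set argument.

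I would set $F := h\circ f\circ h^{-1}$, which is a circle diffeomorphism with the same (irrational) rotation number as $f$, and denote by $\mu_{f}$ and $\mu_{F}$ the unique invariant probability measures of $f$ and $F$ respectively (both exist and are atomless by Denjoy's theorem applied in each setting). The conjugacy relation $h\circ f=F\circ h$ immediately yields $\mu_{F}=h_{*}\mu_{f}$. Because $F$ is a circle diffeomorphism of sufficient regularity, its invariant measure $\mu_{F}$ is equivalent to Lebesgue measure $\lambda$, with a positive density arising from the Denjoy conjugacy of $F$ to the rotation $R_{\rho(f)}$.

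The crucial input is the recent theorem of Adouani \cite{A} and Dzhalilov--Mayer--Safarov \cite{DMS}: under the absolute continuity of $\textrm{Df}$ on each continuity interval, together with $\pi_{s}(f)\neq 1$, the measure $\mu_{f}$ is mutually singular with $\lambda$. Fix a Borel set $N\subset S^{1}$ with $\mu_{f}(N)=1$ and $\lambda(N)=0$. The push-forward identity gives $\mu_{F}(h(N))=\mu_{f}(N)=1$, and the equivalence $\mu_{F}\sim\lambda$ then forces $\lambda(h(N))=1$, so $\lambda(S^{1}\setminus h(N))=0$.

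Thus $h$ is a continuous strictly increasing homeomorphism that sends the $\lambda$-null set $N$ onto a set of full $\lambda$-measure --- the classical signature of a purely singular monotone function --- and consequently the absolutely continuous part in the Lebesgue decomposition of $h$ vanishes, i.e.\ $\textrm{Dh}(x)=0$ for $\lambda$-a.e.\ $x\in S^{1}$. The main obstacle is the singularity $\mu_{f}\perp\lambda$ supplied by \cite{A, DMS}: its proof relies on a delicate analysis of Birkhoff sums of $\log\textrm{Df}$ along $f$-orbits and uses both the a.c.\ hypothesis on $\textrm{Df}$ and the non-triviality of $\pi_{s}(f)$ to concentrate $\mu_{f}$ on a Lebesgue-null set. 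A secondary, less delicate subtlety is the equivalence $\mu_{F}\sim\lambda$, which requires the target diffeomorphism to be smooth enough (at minimum $C^{1}$ with $\log\textrm{DF}$ of bounded variation) for the standard Denjoy-type argument to apply.
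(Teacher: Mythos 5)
There is a genuine gap, and it sits at the step you yourself flag as ``secondary'': the claim that $\mu_{F}\sim\lambda$. The corollary assumes only that $F$ is a diffeomorphism with irrational rotation number; no Diophantine condition is imposed on $\rho(f)$ and no smoothness beyond ``diffeomorphism'' is assumed for $F$. Under these hypotheses the invariant measure of $F$ need not be equivalent to Lebesgue: Denjoy's $C^{1}$ counterexamples are not even conjugate to a rotation, and Arnold's examples show that even real-analytic circle diffeomorphisms with (Liouville) irrational rotation number can have invariant measure singular with respect to $\lambda$. The Denjoy conjugacy of a class $P$ (or $C^{2}$) diffeomorphism to $R_{\rho(f)}$ is only a homeomorphism, so it does not produce a density for $\mu_{F}$. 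Without $\mu_{F}\sim\lambda$ you cannot pass from $\mu_{F}(h(N))=1$ to $\lambda(h(N))=1$, and the push-forward argument collapses. (The final step --- an increasing homeomorphism carrying a $\lambda$-null set onto a set of full measure has $\mathrm{Dh}=0$ a.e. --- is fine; the failure is upstream.)

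More structurally, the invariant-measure route cannot in principle yield this corollary: knowing that $\mu_{f}\perp\lambda$ and knowing $\mu_{F}$ separately does not determine whether the conjugacy \emph{between} $f$ and $F$ is singular, absolutely continuous, or neither --- this is precisely why the results of \cite{A} and \cite{DMS} are formulated as statements about the conjugacy between \emph{two} class $P$ maps with distinct total jump products, rather than about invariant measures. The paper's proof is a one-line application of the Main Theorem of \cite{A} to the pair $(f,F)$, using only that $\pi_{s}(F)=1$ for any diffeomorphism $F$ while $\pi_{s}(f)\neq 1$; the regularity hypothesis on $\mathrm{Df}$ is there to meet the hypotheses of that theorem. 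To repair your argument you would have to either invoke that two-map theorem directly (at which point the measure-theoretic detour is unnecessary) or add a hypothesis guaranteeing $\mu_{F}\sim\lambda$ (e.g.\ a Diophantine condition on $\rho(f)$ plus $F\in C^{2+\varepsilon}$ via Herman--Yoccoz), which the corollary does not assume.
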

\medskip

\textbf{Remark 1}. When $\pi_{s}(f) = 1$, the homeomorphism map $h$ conjugating $f$ to a diffeomorphism can be either 
a singular function or absolutely continued function.
Teplinsky gave in \cite{T} an example $f$ of 
$\textrm{PL}(S^{1})$ with four break points 
lying on pairwise distinct orbits and irrational rotation number of Roth number (but not of bounded type), that is 
conjugated to the rigid rotation by an absolutely continued function.
It is obvious that such example satisfies $\pi_{s}(f) = 1$ and does not satisfy the (D)-property.  
However, Herman has shown in \cite{H} (although not formulated as a statement) that a map $f\in \textrm{PL}(S^{1})$ 
with two breaks points lying on distinct orbits and irrational rotation number has
singular invariant measure; equivalently the homeomorphism $h$ conjugating $f$ to the rigid rotation is 
a singular function. 
 \medskip

This paper is organized as follows. 
Section 2 is devoted to the main technical part of the paper; we conjugate any class $P$-homeomorphism $f$ with several break points through a $\textrm{PQ}$-homeomorphism 
(resp. $\textrm{PE}$-homeomorphism) of $S^{1}$ to a class $P$-homeomorphism with prescribed break points on\textit{ pairwise 
distinct orbits}.
In Section 3, we study the case where $f$  
satisfies the ($D$)-property, we prove that it is conjugated through a 
$\textrm{PQ}$ (resp. $\textrm{PE}$)-homeomorphism of $S^{1}$ to a diffeomorphism. In particular, we study
the case where $f$ has two successive break points. Section 4 is devoted to class $P$-homeomorphism without the (D)-property.
\bigskip

%

%

\section{\bf Reduction to a class $P$-homeomorphisms with prescribed points on pairwise distinct orbits}
The aim of this section is to prove the following
\medskip

\begin{thm}\label{t:623}
Let $f\in \mathcal{P}(S^{1})$ with irrational rotation number, and let $\big(k_{0},\dots,k_{p}\big)\in \mathbb{Z}^{p+1}$. 
Then there exists a a $PQ$-homeomorphism (resp. $PE$-homeomorphism) $h\in \mathcal{P}(S^{1})$ such that $F:= h \circ f \circ h^{-1}\in \mathcal{P}(S^{1})$ with 
\
\\
- $C(F) \subset
\{h(f^{k_{i}}(c_{i}))= F^{k_{i}}(h(c_{i})); i=0,1, \dots, p\}$ \\
- $\sigma_{F}(F^{k_{i}}(h(c_{i}))) = \pi_{s,O_{f}(c_{i})}(f),~i=0,1, \dots, p$.
\end{thm}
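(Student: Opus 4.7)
\emph{Plan.} I build $h$ as a single piecewise-quadratic (resp.\ piecewise-exponential) homeomorphism whose break points lie among the iterates $f^{j}(c_{i})$ and are arranged so that, after conjugation, all $F$-jumps along each orbit $O_f(c_i)$ collapse to a single one at $h(f^{k_i}(c_i))$ of value $\pi_{s,O_f(c_i)}(f)$. The computation rests on the chain-rule identity
\[\sigma_F\bigl(h(x)\bigr) \;=\; \sigma_f(x)\,\frac{\sigma_h(f(x))}{\sigma_h(x)}.\]
Writing $\sigma_j := \sigma_f(f^j(c_i))$ and $\tau_j := \sigma_h(f^j(c_i))$, the requirement $\sigma_F = 1$ at every orbital point with $j \neq k_i$, together with the boundary conditions $\tau_j = 1$ for $j$ outside the extended interval $[\min(0,k_i),\max(N_i,k_i)]$ (so $h$ creates no spurious breaks away from $M_i(f)$), gives the first-order recursion $\tau_{j+1} = \tau_j/\sigma_j$ on each side of $k_i$. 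Solving this telescoping uniquely determines the $\tau_j$ and produces, at $j = k_i$, the residual jump $\prod_\ell \sigma_\ell = \pi_{s,O_f(c_i)}(f)$ for free.

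\emph{Assembly as $L\circ u$.} Since different maximal connections are disjoint and lie on pairwise distinct orbits, Step~1 prescribes globally a finite break set for $h$ with specified jumps $\tau_j^{(i)}$. Fix one such point $c := f^{j^*}(c_{i^*})$ with $\tau_{j^*}^{(i^*)} \neq 1$ and set $\sigma := \prod_{(i,j)} \tau_j^{(i)}$. Put $u := R_c \circ g_\sigma \circ R_c^{-1}$ (respectively $R_c \circ h_\sigma \circ R_c^{-1}$ in the PE case). Let $L$ be the PL homeomorphism of $S^1$ whose break points are the images $u(f^j(c_i))$ and which carries the jump $\tau_j^{(i)}$ at each such point except at $u(c)$, where the assigned jump is $\tau_{j^*}^{(i^*)}/\sigma$. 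By the choice of $\sigma$, the product of $L$'s jumps equals $1$, the necessary and sufficient condition for such a PL homeomorphism to exist; and at $c$ the quadratic and PL contributions combine to give exactly $\tau_{j^*}^{(i^*)}$. Setting $h := L \circ u$ produces the desired PQ (resp.\ PE) homeomorphism realizing the break data of Step~1.

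\emph{Verification and main obstacle.} Away from the orbits of $C(f)$ both $h$ and $f$ are smooth, so $F$ is smooth there; along each orbit the Step~1 telescoping forces $F$ to have its only break at $h(f^{k_i}(c_i))$ with jump $\pi_{s,O_f(c_i)}(f)$. Bounded variation of $\log \mathrm{DF}$ follows from the bounded variations of $\log \mathrm{Df}$ and $\log \mathrm{Dh}$, giving $F \in \mathcal{P}(S^1)$. The principal technical obstacle is the coupling between the single parameter $\sigma$ of $u$ and the closure constraint on $L$: one must calibrate $\sigma$ so that the remaining PL jumps multiply to $1$, which is exactly what the global product identity $\sigma = \prod_{(i,j)}\tau_j^{(i)}$ achieves. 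The edge case $\prod_{(i,j)}\tau_j^{(i)} = 1$, in which the natural $\sigma$ would violate $\sigma\neq 1$, is resolved by a splitting trick: one writes $\tau_{j^*}^{(i^*)}$ as a product $\mu\cdot(\tau_{j^*}^{(i^*)}/\mu)$ for some $\mu\neq 1$ and reallocates the two factors to $c$ and to a nearby auxiliary PL break, keeping $g_\sigma$ (or $h_\sigma$) nontrivial without altering any other jump.
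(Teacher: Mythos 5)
Your construction is, at bottom, the same as the paper's: your $\tau_j$ are exactly the paper's quantities $\sigma_{k,i}(f)$ (the same one-sided telescoping of the jumps $a_{j,i}(f)=\sigma_f(f^j(c_i))$, with the same boundary conditions $m_i=\min(0,k_i)$, $n_i=\max(k_i,N_i)$), your closure constant $\sigma=\prod_{(i,j)}\tau_j^{(i)}$ is the paper's $\sigma(f)$, and the existence of $L$ rests on the same lemma (Lemma \ref{l:22}: prescribed jumps with product $1$). The only structural difference is the order of operations: you design the full jump profile of $h$ first and then factor it as $L\circ u$ with the break of $u$ at one of the points $f^{j^*}(c_{i^*})$, whereas the paper first conjugates by $u$ (with its break placed at $f^{N_0+1}(c_0)$, just outside the maximal connection) to reduce to the case $\sigma(F)=1$ and only then applies the PL step. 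Your packaging is more direct and avoids the paper's second round of jump computations; that part of your argument is sound.

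The genuine problem is your treatment of the edge case $\sigma=\prod_{(i,j)}\tau_j^{(i)}=1$. Since a PL circle homeomorphism always has jump product equal to $1$, the total jump product of $h=L\circ u$ is $\sigma_u(c)\cdot 1=\sigma_u(c)$; hence prescribed data with product $1$ cannot be realized by any $L\circ u$ with $\sigma_u(c)\neq 1$ unless $h$ acquires at least one extra break point. Your ``splitting trick'' introduces exactly such an auxiliary break, at a point $q$ near $c$ carrying jump $\mu\neq 1$. But if $q$ does not lie on an orbit $O_f(c_i)$ (and generically it cannot, since incorporating it into the telescoping forces its jump back to $1$), then the chain rule $\sigma_F(h(x))=\sigma_f(x)\,\sigma_h(f(x))/\sigma_h(x)$ produces two new break points of $F$, at $h(f^{-1}(q))$ and $h(q)$, with jumps $\mu$ and $\mu^{-1}$ respectively; this destroys the conclusion $C(F)\subset\{h(f^{k_i}(c_i))\}$. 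The correct resolution is the trivial one the paper uses: when $\sigma(f)=1$ take $h=L$ purely piecewise linear and drop the factor $u$ altogether (this is precisely the situation of part (ii) of Theorem \ref{t:23}, where $\pi(f)=\sigma(f)=1$ and the conjugacy is PL). With that replacement your proof is complete.
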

\medskip

We need the following lemma, for completeness we present its proof.
\medskip

\begin{lem}
\label{l:22} Let $\sigma_{0},\dots, \sigma_{n}\in \mathbb{R}_{+}^{*}$ such that 
$\sigma_{0}\times\dots\times\sigma_{n} = 1$ 
and let $b_{0},\dots, b_{n}\in S^{1}$. Then there exists $L\in\mathrm{PL}(S^{1})$ 
with break points $b_{0},\dots, b_{n}$ and slopes
$\sigma_{L}(b_{0}) = \sigma_{0},\dots, \sigma_{L}(b_{n}) = \sigma_{n}$. In particular, $\pi_{s}(L)=1$.
\end{lem}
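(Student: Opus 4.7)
The plan is to construct $L$ directly by prescribing its slopes on the pieces between the $b_i$'s and checking that the given jump hypothesis makes the construction well-defined.

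First I would relabel the points so that $b_0, b_1, \dots, b_n, b_{n+1}=b_0$ are in cyclic order on $S^1$, and call $\ell_i$ the length of the arc $I_i=(b_i,b_{i+1})$. Any candidate PL-homeomorphism $L$ with break points exactly in $\{b_0,\dots,b_n\}$ is determined (up to a translation) by its constant slopes $\lambda_i:=\mathrm{D}L_{|I_i}>0$. At the potential break point $b_i$ one has $\mathrm{D}L_{-}(b_i)=\lambda_{i-1}$ and $\mathrm{D}L_{+}(b_i)=\lambda_{i}$ (indices mod $n+1$), so the jump condition $\sigma_L(b_i)=\sigma_i$ becomes the linear recurrence
\begin{equation*}
\lambda_{i-1}=\sigma_i\,\lambda_i,\qquad i=0,1,\dots,n.
\end{equation*}

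The key observation is that this cyclic recurrence admits a strictly positive solution if and only if its consistency condition around the circle is satisfied, namely $\prod_{i=0}^{n}\sigma_i=1$, which is precisely our hypothesis. Fixing any $\lambda_0=c>0$, one obtains explicitly $\lambda_i=c\prod_{j=1}^{i}\sigma_j^{-1}$ for every $i$, and the cyclic closure $\lambda_n=\sigma_0^{-1}\lambda_{-1}=\sigma_0^{-1}\lambda_n$ is automatic from $\sigma_0\sigma_1\cdots\sigma_n=1$.

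Next I need $L$ to descend to a homeomorphism of $S^1$, i.e.\ its lift must satisfy $\widetilde L(x+1)=\widetilde L(x)+1$. This is the single normalization
\begin{equation*}
\sum_{i=0}^{n}\lambda_i\,\ell_i \;=\; 1,
\end{equation*}
which has a unique solution for the free constant $c>0$, since all the $\ell_i$ and all partial products $\prod_{j=1}^{i}\sigma_j^{-1}$ are positive. Defining $\widetilde L$ on $\mathbb R$ by integrating this piecewise-constant derivative (and fixing $\widetilde L(b_0)$ arbitrarily, say equal to $b_0$) then produces an increasing $\mathbb Z$-equivariant homeomorphism, whose projection $L\in\mathrm{PL}(S^1)$ has exactly the prescribed slopes, hence the prescribed jumps at the $b_i$'s. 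The final assertion $\pi_s(L)=1$ is then just the hypothesis $\prod_i\sigma_i=1$ rewritten.

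There is no real obstacle here; the only delicate point is the cyclic consistency of the recurrence, which is guaranteed by the hypothesis. I would only add a brief remark that if some $\sigma_i=1$ the corresponding $b_i$ is not an actual break point of $L$, so the conclusion should be read as \emph{break points contained in} $\{b_0,\dots,b_n\}$, which is harmless.
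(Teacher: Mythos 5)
Your proof is correct and follows essentially the same route as the paper's: prescribe the constant slopes via the jump recurrence, note that the hypothesis $\prod_{i}\sigma_{i}=1$ is exactly the cyclic consistency condition, and fix the free constant by the normalization $\sum_{i}\lambda_{i}\ell_{i}=1$ so that the lift commutes with the unit translation. (The displayed closure identity ``$\lambda_n=\sigma_0^{-1}\lambda_{-1}=\sigma_0^{-1}\lambda_n$'' is a notational slip --- the $i=0$ instance of your recurrence reads $\lambda_{-1}=\sigma_0\lambda_0$, i.e.\ $\lambda_n=\sigma_0 c$, which your explicit formula does satisfy --- but this does not affect the argument.)
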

\medskip

\begin{proof} We let $b_{0} = p(\widetilde{b_{0}}),\dots, b_{n}= p(\widetilde{b_{n}})$, where $\widetilde{b_{0}} < \widetilde{b_{1}} < \dots < \widetilde{b_{n}} < \widetilde{b_{n+1}}$ be real numbers with
$\widetilde{b_{n+1}} = \widetilde{b_{0}}+1$, so $b_{n+1}=b_{0}$.
\medskip

 Define a $\textrm{PL}$-homeomorphism $\widetilde{L}$ on $[\widetilde{b_{0}}, \widetilde{b_{n+1}}]$ as follows:
\medskip

- $\widetilde{b_{0}},\dots, \widetilde{b_{n}}$ are the break points of $\widetilde{L}$. 

- $\sigma_{j}:= \sigma_{\widetilde{L}}(\widetilde{b_{j}})$ the jump of $\widetilde{L}$ in $\widetilde{b_{j}}$, $j=0,\dots, n$.
\medskip

Denote by

- $\lambda_{j} = D\widetilde{L}_{-}(\widetilde{b_{j}})$ the slope of $\widetilde{L}$ on
$[\widetilde{b_{j-1}}, \widetilde{b_{j}}[$, $j=1,\dots, n$

- $\lambda_{0}$ the slope of $\widetilde{L}$ on
$[\widetilde{b_{n}}, \widetilde{b_{n+1}}[$, \ $\widetilde{L}(x) = \lambda_{0}(x-\widetilde{b_{n+1}})+\widetilde{b_{n+1}}, \; \ x\in [\widetilde{b_{n}}, 
\widetilde{b_{n+1}}[$. 

-  $\widetilde{L}(\widetilde{b_{0}})=\widetilde{b_{0}}$.

One has $\sigma_{j}= \frac{\lambda_{j}}{\lambda_{j+1}}$ and
$\frac{\lambda_{0}}{\lambda_{j}}=
\frac{\lambda_{0}}{\lambda_{1}}\times\dots \times\frac{\lambda_{j-1}}{\lambda_{j}} = \sigma_{0}\times\dots \times\sigma_{j-1}$. Hence 
$$\lambda_{j}= (\sigma_{0}\times\dots \times\sigma_{j-1})^{-1}\lambda_{0}, \ j=1,\dots, n.$$ 
To determine $\lambda_{0}$, we have the identity 
$$\widetilde{L}(\widetilde{b_{n}}) = \widetilde{L}(\widetilde{b_{0}}) + 
\underset{j=0}{\overset{n-1}{\sum}}\lambda_{j+1}(\widetilde{b_{j+1}}-\widetilde{b_{j}})=$$ 
$$\widetilde{b_{0}} + 
\lambda_{0}\underset{j=0}{\overset{n-1}{\sum}}(\sigma_{0}\times\dots \times\sigma_{j})^{-1}(\widetilde{b_{j+1}}-
\widetilde{b_{j}})=$$ $$-\lambda_{0}(\widetilde{b_{n+1}}-\widetilde{b_{n}}) + \widetilde{b_{n+1}}.$$  
Thus 

$$\lambda_{0}\left(\underset{j=0}{\overset{n-1}{\sum}}(\sigma_{0}\times\dots \times\sigma_{j})^{-1}(\widetilde{b_{j+1}}-
\widetilde{b_{j}}) + 
(\widetilde{b_{n+1}}-\widetilde{b_{n}})\right) = 1.$$

Hence 

$$\lambda_{0} = \dfrac{1}{\left(\underset{j=0}{\overset{n-1}{\sum}}(\sigma_{0}\times\dots \times\sigma_{j})^{-1}
(\widetilde{b_{j+1}}-\widetilde{b_{j}}) + 
(\widetilde{b_{n+1}}-\widetilde{b_{n}})\right)}.$$
\bigskip

Then $\widetilde{L}$ is a homeomorphism of $[\widetilde{b_{0}}, \widetilde{b_{n+1}}]$. 
The $PL$-homeomorphism $L$ of $S^{1}$ is then defined by its 
lift $\widetilde{L}$ restricted to $[\widetilde{b_{0}}, \widetilde{b_{n+1}}]$. 
 \end{proof}
 \medskip
 
\begin{proof}[Proof of Theorem \ref{t:623}] Set for $i=0, \dots, p$ and  $k \in \mathbb{Z}$:

$$ m_{i} = \min(0,k_{i}),~n_{i}=\max (k_{i},N_{i})$$

$$ \sigma(f) = \prod_{i=0}^{p}\prod_{k \in \mathbb{Z}}\sigma_{k,i}(f),$$

where $$\sigma_{k,i}(f) = \left\{
               \begin{array}{ll}
                 \prod_{j \geq k}a_{j,i}(f), & \textrm{if}~~ k>k_{i}\\\\

                \dfrac{ 1}{\prod_{j < k}a_{j,i}(f)}, & \textrm{if} ~~k \leq k_{i}
               \end{array}
             \right.$$
\
\\
and $$a_{k,i}(f)=\sigma_{f}(f^{k}(c_{i})).$$
\
\\
 Then we obtain $$
 \sigma(f)= \prod_{i=0}^{p}(\pi_{s,O_{f}(c_{i})}(f))^{-k_{i}} \prod_{j \in
\mathbb{Z}}(a_{j,i}(f))^{j}$$
 
 Indeed, we have
 
 $$a_{k,i}(f)=1, ~~\textrm{if} ~~k<0 ~~\textrm{or}~~ k>N_{i}$$

$$\sigma_{k,i}(f)=1,~~ \textrm{if} ~~k<m_{i} ~~\textrm{or} ~~k>n_{i}$$

\begin{eqnarray*}
  \prod_{k \leq k_{i}}\sigma_{k,i}(f) &=&   \prod_{k \leq k_{i}}\left(\prod_{j < k}(a_{j,i}(f))^{-1}\right) \\
    & = &  \prod_{j < k_{i}}\left(\prod_{j < k \leq k_{i}}(a_{j,i}(f))^{-1}\right)\\
& = &   \prod_{j < k_{i}}(a_{j,i}(f))^{j-k_{i}}\\
& = &   \prod_{p < 0}\left(a_{p+k_{i},i}(f)\right)^{p}
\end{eqnarray*}
Similarly, $$\prod_{k >k_{i}}\sigma_{k,i}(f) = \prod_{p \geq
0}\left(a_{p+k_{i},i}(f)\right)^{p}$$
 So
\begin{eqnarray*}
 \prod_{k \in \mathbb{Z} }\sigma_{k,i}(f)&=&\prod_{p \in
\mathbb{Z}}(a_{p+k_{i},i}(f))^{p} \\
& = & \prod_{j \in \mathbb{Z}}(a_{j,i}(f))^{j-k_{i}} \\
&= & (\pi_{s,O_{f}(c_{i})}(f))^{-k_{i}} \prod_{j\in
\mathbb{Z}}(a_{j,i}(f))^{j}
\end{eqnarray*}
Therefore
\begin{eqnarray*}
 \sigma(f) &=&\prod_{i=0}^{p}\prod_{k \in \mathbb{Z} }\sigma_{k,i}(f) \\
&= & \prod_{i=0}^{p}(\pi_{s,O_{f}(c_{i})}(f))^{-k_{i}} \prod_{j \in
\mathbb{Z}}(a_{j,i}(f))^{j}
\end{eqnarray*}
\medskip

Now, set $$b_{k,i}(f)=\dfrac{\sigma_{k+1,i}(f)}{\sigma_{k,i}(f)}a_{k,i}(f).$$ Then we obtain
$$b_{k,i}(f) =\left\{
  \begin{array}{ll}
  \pi_{s,O_{f}(c_{i})}(f), & \textrm{if} ~~ k=k_{i}\\\\

    1, & \textrm{otherwise}
  \end{array}
\right. $$
\medskip

 Indeed:
\\
\\
For $k>k_{i}$,
\begin{eqnarray*}
 \sigma_{k,i}(f) &=&\prod_{j \geq k}a_{j,i}(f) \\
& = & a_{k,i}(f)\prod_{j \geq k+1 }a_{j,i}(f) \\
& = & a_{k,i}(f)~\sigma_{k+1,i}(f)
\end{eqnarray*}
For $k<k_{i}$,
\begin{eqnarray*}
 \sigma_{k,i}(f) &=& \dfrac{1}{\prod_{j < k}a_{j,i}(f)} \\\\
& = & \dfrac{a_{k,i}(f)}{\prod_{j < k+1 }a_{j,i}(f)} \\\\
& = & a_{k,i}(f)~\sigma_{k+1,i}(f)
\end{eqnarray*}
For $k=k_{i}$,

\begin{eqnarray*}
b_{k,i}(f)&= &\dfrac{\sigma_{k+1,i}(f)}{\sigma_{k,i}(f)} a_{k,i}(f) \\\\
& = & \dfrac{\prod_{j \geq k +1}a_{j,i}(f)}{ \big (\prod_{j < k } a_{j,i}(f)\big)^{-1}} a_{k,i}(f) \\\\
& = & ~~\prod_{j\in \mathbb{Z}} a_{j,i}(f)\\\\
& =  & \pi_{s,O_{f}(c_{i})}(f)
\end{eqnarray*}
\medskip

We distinguish two cases.
\medskip

\
\\
\textbf{Case 1}: $\sigma(f)=1$. By Lemma \ref{l:22}, there exists $L\in
\textrm{PL}(S^{1})$ with the following properties:
\medskip
\
\\
(i) $L(0)=0$ \\
(ii) $C(L)\subset \{f^{k}(c_{i}):~m_{i} \leq k \leq n_{i},~0 \leq i
\leq p \}$ \\
(iii) $\sigma_{L}(f^{k}(c_{i}))=\sigma_{k,i}(f)$
\medskip
\
\\
We let $F = L\circ f\circ L^{-1}$. A priori, the break points of $F$ are: \\
- The break points of $L^{-1}$: $L(f^{k}(c_{i})), ~m_{i} \leq k
\leq n_{i},~0 \leq i \leq p$, \\
- The image by $L$ of break points of $f$: $L(f^{k}(c_{i})),~m_{i}-1
\leq k \leq n_{i},~0 \leq i \leq p$. \\
Therefore the possible break points of $F$ are among:
$L(f^{k}(c_{i})), ~m_{i} \leq k \leq n_{i},~0 \leq i \leq p$. Compute the jumps of $F$ in these points:
\begin{align*}
\sigma_{F}(L(f^{k}(c_{i}))) & = & \ \dfrac{\sigma_{L}(f(f^{k}(c_{i})))~\sigma_{f}(f^{k}(c_{i}))}{\sigma_{L}(f^{k}(c_{i}))} \\
&=  & \ \dfrac{\sigma_{k+1,i}(f)~a_{k,i}(f)}{\sigma_{k,i}(f)}\\
&= & \  b_{k,i}(f) \\
& =& \begin{cases}
\pi_{s,O_{f}(c_{i})}(f), & \textrm{if} ~~k=k_{i}\\
                  1, & \ \textrm{otherwise}
\end{cases}
\end{align*}
We conclude that $C(F)\subset \{L(f^{k}(c_{i})):~0 \leq i \leq p \}$
with $\sigma_{F}(L(f^{k}(c_{i})))=\pi_{s,O_{f}(c_{i})}(f)$. 
\medskip

\textbf{Case 2}: $\sigma(f)\neq 1$. Set $\sigma =\sigma(f)$ and define $u = R_{c}\circ g_{\sigma}\circ
R_{c}^{-1} $ (resp. $u= R_{c}\circ h_{\sigma}\circ
R_{c}^{-1} $), where $c=f^{N_{0}+1}(c_{0})$. Then $u$ is a particular $PQ$-homeomorphism (resp. $PE$-homeomorphism) 
with one break point $c$ and such that:
$\sigma_{u}(c)=\sigma$. We let
$F = u\circ f\circ u^{-1}$. A priori, the break points of $F$ are:\\

- The break point of $u^{-1}: ~u(f^{N_{0}+1}(c_{0}))$ \\
- The image by $u$ of break points of $f$:
$u(f^{k}(c_{i})), ~0 \leq k \leq N_{i}, ~0\leq i \leq p$ \\
- The image by $u\circ f^{-1}$ of the break point of $u
:~u(f^{N_{0}}(c_{0}))$ \\
Therefore the possible break points of $F$ are among \\
$u(f^{k}(c_{i})),~0 \leq k \leq N_{i},~1 \leq i \leq p, \ \textrm{
and } u(f^{k}(c_{0})), ~0 \leq k \leq N_{0}+1$.\\ 

Compute the jumps of $F$ in these points:\\
For $~0 \leq k \leq N_{i},~1 \leq i \leq p$,
\begin{align*}
\sigma_{F}(u(f^{k}(c_{i}))) & =
\dfrac{\sigma_{L}(f(f^{k}(c_{i})))~\sigma_{f}(f^{k}(c_{i}))}{\sigma_{u}(f^{k}(c_{i}))}\\\\
& =   a_{k,i}(f)\\\\
\sigma_{F}(u(f^{N_{0}}(c_{0}))) & =
\dfrac{\sigma_{u}(f(f^{N_{0}+1}(c_{0})))~\sigma_{f}(f^{N_{0}}(c_{0}))}{\sigma_{u}(f^{N_{0}}(c_{0}))}\\\\
& =  \dfrac{\sigma(f)~a_{N_{0},0}(f)}{1}\\\\
& =  \sigma(f)~a_{N_{0},0}(f)\\\\
 \sigma_{F}(u(f^{N_{0}+1}(c_{0})))& =
\dfrac{\sigma_{u}(f(f^{N_{0}+2}(c_{0})))~\sigma_{f}(f^{N_{0}+1}(c_{0}))}{\sigma_{u}(f^{N_{0}+1}(c_{0}))}\\\\
& =  \dfrac{1 \times 1}{\sigma(f)}\\\\
& =  \dfrac{1}{\sigma(f)}\\\\
\sigma_{F}(u(f^{k}(c_{0}))) &  =  a_{k,0}(f),~0 \leq k < N_{0}
\end{align*}
Let $a_{k,i}(F): = \sigma_{F}(F^{k}(d_{i})) =
\sigma_{F}(u(f^{k}(c_{i})))$, where $~d_{i} = u(c_{i}),~\textrm{for} ~0 \leq k \leq
N_{0},~ 0 \leq i \leq p$. Then, \\\\
 $$\sigma_{k,i}(F) = \left\{
   \begin{array}{ll}
\prod_{j \geq k}a_{j,i}(F), &  \textrm{if} ~k>k_{i} \\
 \dfrac{1}{\prod_{j < k}a_{j,i}(F)}, & \textrm{if} ~k \leq k_{i}
  \end{array}
\right.$$ For $1\leq i \leq p$,

\begin{eqnarray*}
  \prod_{k \in \mathbb{Z}} \sigma_{k,i}(F)& = & (\pi_{s,O_{F}(d_{i})}(F))^{-k_{i}} ~\prod_{j \in \mathbb{Z}}~
(a_{j,i}(F))^{j}\\
  &=& (\pi_{s,O_{f}(c_{i})}(f))^{-k_{i}} ~\prod_{j \in \mathbb{Z}}~
(a_{j,i}(f))^{j} \\
& = & \prod_{k \in \mathbb{Z}} \sigma_{k,i}(f)
\end{eqnarray*}

\begin{eqnarray*}
 \prod_{k \in \mathbb{Z}} \sigma_{k,0}(F)& = &
(\pi_{s,O_{F}(d_{0})}(F))^{-k_{0}} ~\prod_{j \in \mathbb{Z}}~
(a_{j,0}(F))^{j}\\\\
& = &
(\pi_{s,O_{f}(c_{0})}(f))^{-k_{0}}~(a_{N_{0},0}(F))^{N_{0}}~(a_{N_{0}+1,0}(F))^{N_{0}+1}
~\prod_{ j \in \mathbb{Z},~ j \neq N_{0},~j \neq
N_{0}+1}~(a_{j,0}(F))^{j}\\\\
& = &
(\pi_{s,O_{f}(c_{0})}(f))^{-k_{0}}~\left(\sigma(f)~a_{N_{0},0}(f)\right)^{N_{0}}~
\left(\dfrac{a_{N_{0}+1,0}(f)}{\sigma(f)}\right)^{N_{0}+1}
~\prod_{j\in \mathbb{Z},~ j\neq N_{0},~j \neq
N_{0}+1}~(a_{j,0}(f))^{j}\\\\
& = & \dfrac{1}{\sigma(f)}~(\pi_{s,O_{f}(c_{0})}(f))^{-k_{0}}~\prod_{j\in \mathbb{Z}}~(a_{j,0}(f))^{j}\\\\
& = & \dfrac{1}{\sigma(f)}~\prod_{j\in \mathbb{Z}}~\sigma_{j,0}(f) 
\end{eqnarray*}

Therefore \\
\begin{eqnarray*} \sigma(F) & = & \prod_{i=0}^{p}~\prod_{j\in
\mathbb{Z}}~\sigma_{j,i}(F)\\
& = & \dfrac{1}{\sigma(f)}~\prod_{i=0}^{p}~\prod_{j\in
\mathbb{Z}}~\sigma_{j,i}(f)\\
& = & 1
\end{eqnarray*}

We conclude that $F \in \mathcal{P}(S^{1})$ that satisfies $\sigma(F) = 1$ and with maximal $F$-connections $M_{0}(F)
= [u(c_{0}), \dots, F^{N_{0}+1}(u(c_{0}))]$ and $M_{i}(F) =
[u(c_{i}), \dots, F^{N_{i}}(u(c_{i}))]$, for $1 \leq i \leq p$.
 Then, by the case 1, there exists $L \in
PL(S^{1})$ that conjugates $F$ to a class $P$-homeomorphism $G
= L\circ F \circ L^{-1}$ with $C(G)\subset \{G^{k_{i}}[L(u(c_{i}))]:
~0 \leq i \leq p\}$ and $\sigma_{G}(G^{k_{i}}([L(u(c_{i}))]) =
\pi_{s,O_{f}(c_{i})}(f),~0 \leq i \leq p$. Moreover $h: =
L\circ u$ is a $PQ$-homeomorphism (resp. $PE$-homeomorphism) that
conjugates $f$ to $G$ with $C(G)\subset\{G^{k_{i}}(h(c_{i})):
~0 \leq i \leq p\}$ and $\sigma_{G}(G^{k_{i}}(h(c_{i}))) =
\pi_{s,O_{f}(c_{i})}(f),~0 \leq i \leq p$. This completes the proof.
\end{proof}
\medskip

\begin{cor}
\label{c:21} Let $f\in \mathcal{P}(S^{1})$ with irrational rotation number. Then, there exists 
$h\in \mathcal{P}(S^{1})$ such that:
 $F = h\circ f \circ h^{-1}\in \mathcal{P}(S^{1})$ with $C(F)\subset\{h(c_{0}),\dots,h(c_{p})\}$, where 
 $c_{0},\dots, c_{p}\in C(f)$ are on pairwise distinct orbits.
 Moreover $\sigma_{F}(h(c_{i})) = \pi_{s,O_{f}(c_{i})}(f),~i=0,1, \dots, p$.
\end{cor}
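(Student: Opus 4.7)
The plan is to derive this as a direct specialization of Theorem \ref{t:623}. Since $C(f)$ is finite, it splits into finitely many orbits under $f$; I would first select one break point from each such orbit and relabel these representatives as $c_{0},\dots,c_{p}$, so that by construction they lie on pairwise distinct orbits.

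Next I would apply Theorem \ref{t:623} to $f$ with the integer tuple $(k_{0},\dots,k_{p}) = (0,\dots,0) \in \mathbb{Z}^{p+1}$. The theorem produces a $\mathrm{PQ}$ (resp.\ $\mathrm{PE}$)-homeomorphism $h$ such that $F := h\circ f\circ h^{-1} \in \mathcal{P}(S^{1})$, with
$$C(F) \subset \{h(f^{k_{i}}(c_{i})) = F^{k_{i}}(h(c_{i})) : 0 \leq i \leq p\}.$$
Specializing to $k_{i}=0$ collapses this inclusion to $C(F) \subset \{h(c_{i}) : 0 \leq i \leq p\}$, and the jump formula $\sigma_{F}(F^{k_{i}}(h(c_{i}))) = \pi_{s,O_{f}(c_{i})}(f)$ becomes precisely $\sigma_{F}(h(c_{i})) = \pi_{s,O_{f}(c_{i})}(f)$ for each $i$, which is exactly what the corollary asserts.

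Finally I would verify that $h \in \mathcal{P}(S^{1})$. The two building blocks $g_{\sigma}$ and $h_{\sigma}$ lie in $\mathcal{P}(S^{1})$ by their very definitions, and $\mathrm{PL}(S^{1}) \subset \mathcal{P}(S^{1})$ together with the fact that $\mathcal{P}(S^{1})$ is a subgroup of $\mathrm{Homeo}_{+}(S^{1})$ imply that any $\mathrm{PQ}$- or $\mathrm{PE}$-homeomorphism, being a composition of a $\mathrm{PL}$-map with a rotation-conjugate of $g_{\sigma}$ or $h_{\sigma}$, is itself in $\mathcal{P}(S^{1})$. There is no serious obstacle here: the content of the corollary is entirely carried by Theorem \ref{t:623}, and the role of this statement is simply to package its conclusion under the most natural normalization $k_{i}=0$, after choosing the $c_{i}$ as orbit representatives.
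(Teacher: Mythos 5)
Your proposal is correct and follows exactly the paper's own route: the paper likewise proves the corollary by invoking Theorem \ref{t:623} with $k_{i}=0$ for all $i$. Your explicit preliminary step of choosing one break point per orbit as representatives $c_{0},\dots,c_{p}$ is a harmless (and arguably welcome) clarification of what the paper leaves implicit in its indexing of maximal $f$-connections.
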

\smallskip

\begin{proof} Take $k_{i}= 0$ for all $i$ in Theorem \ref{t:623}. So we get
 $F = h\circ f \circ h^{-1}\in \mathcal{P}(S^{1})$
with $C(F)\subset\{h(c_{0}),\dots, h(c_{p})\}$, where  $c_{0},\dots, c_{p}\in C(f)$
are on pairwise distinct orbits.
\end{proof}
\medskip

\section{\bf Class $P$-homeomorphisms with the (D)-property} 
\medskip

\subsection{Proof of Theorem \ref{t:23}}
\medskip

\begin{lem}\label{l31} Let $f\in \mathcal{P}(S^{1})$ with irrational rotation number. If $f$ has the (D)-property then 
$\sigma(f)= \pi(f)$.
 \end{lem}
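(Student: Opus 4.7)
The approach is to invoke directly the algebraic identity for $\sigma(f)$ that was derived in the course of the proof of Theorem \ref{t:623}. There, starting from the bare definition $\sigma(f) = \prod_{i=0}^{p}\prod_{k\in\mathbb{Z}}\sigma_{k,i}(f)$ and splitting the product over $k$ into the pieces $k>k_{i}$ and $k\leq k_{i}$, the authors established the closed form
$$\sigma(f)=\prod_{i=0}^{p}\bigl(\pi_{s,O_{f}(c_{i})}(f)\bigr)^{-k_{i}}\prod_{j\in\mathbb{Z}}\bigl(a_{j,i}(f)\bigr)^{j}.$$
This is the only computational input needed, and it is already available.

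Next I would simply feed the (D)-property into this formula. By Definition \ref{d:14}, $\pi_{s,O_{f}(c_{i})}(f)=1$ for every $i=0,\dots,p$, so each factor $(\pi_{s,O_{f}(c_{i})}(f))^{-k_{i}}$ equals $1$ regardless of the choice of $k_{i}$. Hence the first product collapses and one is left with
$$\sigma(f)=\prod_{i=0}^{p}\prod_{j\in\mathbb{Z}}\bigl(a_{j,i}(f)\bigr)^{j}=\prod_{i=0}^{p}\prod_{j\in\mathbb{Z}}\bigl(\sigma_{f}(f^{j}(c_{i}))\bigr)^{j}=\prod_{i=0}^{p}\pi_{O_{f}(c_{i})}(f)=\pi(f),$$
where the second equality is the definition $a_{j,i}(f)=\sigma_{f}(f^{j}(c_{i}))$ and the third is the alternative expression for $\pi_{O_{f}(c_{i})}(f)$ recorded after Definition \ref{d:13} (via Lemma 2.7 of \cite{AM2}).

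There is essentially no obstacle to this argument: it is a two-line consequence of an identity that has already been proved. The only conceptual remark worth making is that, a priori, $\sigma(f)$ depends on the auxiliary tuple $(k_{0},\dots,k_{p})\in\mathbb{Z}^{p+1}$; the role of the (D)-property is precisely to annihilate the $k_{i}$-dependent factor, so that $\sigma(f)$ becomes an intrinsic invariant of $f$ and coincides with $\pi(f)$.
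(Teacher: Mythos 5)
Your proof is correct and follows essentially the same route as the paper's: both invoke the identity $\sigma(f)=\prod_{i=0}^{p}\bigl(\pi_{s,O_{f}(c_{i})}(f)\bigr)^{-k_{i}}\prod_{j\in\mathbb{Z}}\bigl(a_{j,i}(f)\bigr)^{j}$ established in the proof of Theorem \ref{t:623}, use the (D)-property to kill the first factor, and identify the remainder with $\pi(f)$. If anything, your write-up is slightly more careful than the paper's, which states $\prod_{j\in\mathbb{Z}}(a_{j,i}(f))^{j}=\pi(f)$ for each $i$ where it should read $\pi_{O_{f}(c_{i})}(f)$, the product over $i$ then giving $\pi(f)$ exactly as you wrote.
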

\smallskip
 
 \begin{proof} We have $\sigma(f)= \prod_{i=0}^{p}(\pi_{s,O_{f}(c_{i})}(f))^{-k_{i}} \prod_{j\in
\mathbb{Z}}(a_{j,i}(f))^{j}$. Since $\pi_{s,O_{f}(c_{i})}=1$ and $\prod_{j\in
\mathbb{Z}}(a_{j,i}(f))^{j}= \pi(f)$, for every $i=0,\dots, p$, thus $\sigma(f)= \pi(f)$.
 \end{proof}
\smallskip
\
\\
{\it Proof of Theorem \ref{t:23}}. From the Corollary \ref{c:21}, it follows that $F: h\circ f\circ h^{-1}$ is a 
diffeomorphism since  
$\sigma_{F}(h(c_{i})) = \pi_{s,O_{f}(c_{i})}(f)=1,~i=0,1, \dots, p$.
Now by the proof of Theorem \ref{t:623}, $h$ is a $\textrm{PL}$-homeomorphism if $\sigma(f) = 1$ and a 
$\textrm{PQ}$ (resp. $\textrm{PE}$)-homeomorphism if $\sigma(f)\neq 1$. We conclude by the Lemma \ref{l31}. \qed  
\medskip

\textbf{Remark 2.} The $\textrm{PE}$ (resp. $\textrm{PQ}$)-homeomorphism $h = L\circ u$ 
that conjugates $f$ to a diffeomorphism can be chosen so that its rotation number is $0$. Indeed, 
let $u = R_c\circ g_{\sigma}\circ R_c^{-1}$ (resp. $R_c\circ h_{\sigma}\circ R_c^{-1}$), for some  
$\sigma\in \mathbb{R}_{+}^{*}\setminus\{1\}$ and $c\in S^{1}$. Set $d = R_{c}(0)$ and choose $L\in \mathrm{PL}(S^{1})$ such that 
$L(d)= d$. Then $h(d)=d$ and so $h$ has a rotation number $0$. 
\medskip

\subsection{\bf Case of two break points} 
\medskip

Let $f\in \mathcal{P}(S^{1})$ with irrational
rotation number $\alpha$ and with two break points $b$ and  $f(b)$. Assume that $f$ satisfies the ($D$)-property. 
We give a direct conjugation $h$ from $f$ to a diffeomorphism. This conjugation $h$ is different from that constructed in 
the proof of Theorem \ref{t:623}.
\medskip
We let $b^{\prime} = f(b)$ and $\sigma = \pi(f)^{-1}$. Define 
$h: = R_{b^{\prime}}\circ h_{\sigma}^{-1}\circ (R_{b^{\prime}})^{-1}$. 
 Then  $h$ is a $PE$-homeomorphism with one break point 
 $b^{\prime}$ such that: $\sigma_{h}(b^{\prime}) = \ \sigma^{-1}$. 
 \medskip
 
\begin{prop}\label{p41} Let $f\in \mathcal{P}(S^{1})$ with two break points $b$ and  $f(b)$ and irrational rotation number. Assume that 
$ \pi(f)\neq 1$. Then $F: = h\circ f\circ h^{-1}\in \mathcal{P}(S^{1})$ with $C(F)\subset \{h(b)\}$ such that $\sigma_{F}(h(b)) = 
\pi_{s,O_{f}(b)}(f)$.
\
\\
In particular if $f$ satisfies the ($D$)-property then $F$ is a diffeomorphism.
 \end{prop}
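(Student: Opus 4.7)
The plan is to show that the only candidate break points of $F=h\circ f\circ h^{-1}$ are $h(b)$ and $h(b')$, and then to compute the $F$-jumps at these two points directly using the multiplicativity of jumps under composition and inversion.

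First I would set up the bookkeeping. Since jumps satisfy $\sigma_{u\circ v}(x)=\sigma_{u}(v(x))\cdot\sigma_{v}(x)$ and $\sigma_{h^{-1}}(x)=\sigma_{h}(h^{-1}(x))^{-1}$, we get
\begin{equation*}
\sigma_F(x) \;=\; \frac{\sigma_h(f(h^{-1}(x)))\,\sigma_f(h^{-1}(x))}{\sigma_h(h^{-1}(x))}.
\end{equation*}
The homeomorphism $h$ has a single break at $b'$, and $f$ has breaks only at $b$ and $b'$, so the numerator or denominator of this expression fails to equal $1$ only when $h^{-1}(x)\in\{b,b'\}$ or $f(h^{-1}(x))=b'$; the latter gives $h^{-1}(x)=b$ since $b'=f(b)$. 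Hence $C(F)\subset\{h(b),h(b')\}$.

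Next I would compute $\sigma_h(b')$. By construction $h=R_{b'}\circ h_\sigma^{-1}\circ R_{b'}^{-1}$ with $\sigma=\pi(f)^{-1}$, and $\sigma_{h_\sigma}(0)=\sigma$, so $\sigma_{h}(b')=\sigma^{-1}=\pi(f)$. The key arithmetic step is to identify $\pi(f)$. Since $b,b'=f(b)$ lie on the same orbit and form one maximal $f$-connection, the formula from Lemma 2.7 gives
\begin{equation*}
\pi(f)=\pi_{O_f(b)}(f)=\prod_{j\in\mathbb{Z}}\bigl(\sigma_f(f^j(b))\bigr)^{j}=\sigma_f(b')^{1}=\sigma_f(b').
\end{equation*}
Consequently $\sigma_h(b')=\sigma_f(b')$.

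Now I evaluate the two jumps. At $x=h(b)$, $h^{-1}(x)=b$ and $f(h^{-1}(x))=b'$, and since $b$ is not a break point of $h$ (it differs from $b'$ because $f$ has irrational rotation number, so $b$ is not fixed),
\begin{equation*}
\sigma_F(h(b))=\frac{\sigma_h(b')\,\sigma_f(b)}{1}=\sigma_f(b')\,\sigma_f(b)=\pi_{s,O_f(b)}(f).
\end{equation*}
At $x=h(b')$, $h^{-1}(x)=b'$ and $f(h^{-1}(x))=f(b')=f^{2}(b)$, which is not a break point of $h$ (again by irrationality of the rotation number), so $\sigma_h(f^{2}(b))=1$ and
\begin{equation*}
\sigma_F(h(b'))=\frac{1\cdot\sigma_f(b')}{\sigma_h(b')}=\frac{\sigma_f(b')}{\sigma_f(b')}=1,
\end{equation*}
showing $h(b')\notin C(F)$ and yielding $C(F)\subset\{h(b)\}$ with the asserted jump. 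For the last statement, if $f$ has the $(D)$-property then $\pi_{s,O_f(b)}(f)=1$, so $\sigma_F(h(b))=1$ and $F$ has no break points; being a class $P$-homeomorphism that is $C^{1}$ everywhere, it is a diffeomorphism.

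The main obstacle is purely bookkeeping: making sure that no additional break points are created at points like $f^{2}(b)$ or $b$, which reduces to checking that the irrational rotation number forbids the coincidences $b=b'$ and $f^{2}(b)=b'$, together with the identification $\pi(f)=\sigma_f(b')$ that lets the chosen $\sigma_h(b')$ precisely cancel the jump of $f$ at $b'$.
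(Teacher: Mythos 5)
Your proposal is correct and follows essentially the same route as the paper: you use the same conjugator $h=R_{b'}\circ h_\sigma^{-1}\circ R_{b'}^{-1}$ with $\sigma=\pi(f)^{-1}$, the same multiplicativity formula for jumps under conjugation, and the same two jump computations at $h(b)$ and $h(b')$. The only difference is that you spell out two details the paper leaves implicit (why the candidate break points are exactly $h(b)$ and $h(b')$, and why $\pi(f)=\sigma_f(b')$), which is a welcome but not substantive addition.
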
 
\medskip

\begin{proof} We let $F = h\circ f\circ h^{-1}$. Then
 $$\sigma_{F}(h(b^{\prime})) = \
\frac{\sigma_{h}(f(b^{\prime}))
\sigma_{f}(b^{\prime})}{\sigma_{h}(b^{\prime})}.$$

As $\sigma_{h}(f(b^{\prime})) = 1$ and $\sigma_{f}(b^{\prime}) = \sigma^{-1}$, so  
$\sigma_{F}(h(b^{\prime})) = 1$.

On the other hand, we have:
$$\sigma_{F}(h(b)) = \
\frac{\sigma_{h}(b^{\prime})
\sigma_{f}(b)}{\sigma_{h}(b)}.$$

As $\sigma_{h}(b^{\prime}) = \sigma^{-1}$, $\sigma_{h}(b) =1$ and 
$\sigma_{f}(b) = \frac{\pi_{s,O_{f}(b)}(f)}{\sigma^{-1}}$

then $\sigma_{F}(h(b)) = \pi_{s,O_{f}(b)}(f)$  and $C(F)\subset \{h(b)\}$.

 In particular, if $f$ satisfies the ($D$)-property then $\pi_{s,O_{f}(b)}(f)=1$. So $F$ has no break points and $F$ is a diffeomorphism.
 \end{proof}
\bigskip

\begin{cor}\label{l:43} Let  $f\in \mathrm{PL}(S^{1})$ with irrational
rotation number $\alpha$ and with two break points $b$ and  $f(b)$. Then $\pi(f)\neq 1$ and
$h\circ f\circ h^{-1}$ is the rotation $R_{\alpha}$.
\end{cor}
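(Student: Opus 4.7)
The plan is to verify both assertions directly, using Proposition~\ref{p41} to obtain $C^{1}$-smoothness and then identifying the smooth conjugate by a direct computation. First, $\pi(f) \neq 1$: any PL circle homeomorphism has the product of its slope jumps equal to $1$ around $S^{1}$, so with only the two break points $b$ and $b' = f(b)$ one has $\sigma_{f}(b)\sigma_{f}(b') = 1$, i.e.\ $\pi_{s,O_{f}(b)}(f) = 1$. Hence $f$ already satisfies the $(D)$-property on its unique orbit with breaks. In the formula $\pi_{O_{f}(b)}(f) = \prod_{j \in \mathbb{Z}} \sigma_{f}(f^{j}(b))^{j}$ only the term $j = 1$ contributes nontrivially (the $j = 0$ factor is $\sigma_{f}(b)^{0} = 1$ and all others vanish because $f^{j}(b) \notin C(f)$), giving $\pi(f) = \sigma_{f}(b') \neq 1$ since $b'$ is a genuine break.

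With the $(D)$-property in hand, Proposition~\ref{p41} immediately yields that $F := h \circ f \circ h^{-1}$ is a $C^{1}$-diffeomorphism. To identify $F$ with $R_{\alpha}$, I would compute $F$ explicitly on each affine piece of $f$. Shifting coordinates so that $b' = 0$, the map $h$ becomes $h_{\sigma}^{-1}$ with $\sigma = 1/\pi(f) = \mu_{1}/\mu_{2}$, where $\mu_{1}, \mu_{2}$ denote the two slopes of $f$. On a piece where $f$ is affine with slope $\mu_{i}$, say $f(x) = \mu_{i} x + c_{i}$, using $h_{\sigma}(y) = (\sigma^{y} - 1)/(\sigma - 1)$ the composition simplifies to
\[
F(y) \;=\; \log_{\sigma}\bigl(\mu_{i}\,\sigma^{y} + K_{i}\bigr), \qquad K_{i} = (\sigma - 1)\,c_{i} - \mu_{i} + 1.
\]
The PL continuity relation $\mu_{2}\beta + \mu_{1}(1 - \beta) = 1$ forces $K_{1} = 0$ on the first piece, and, after applying the lift identity $\widetilde{h}(z + 1) = \widetilde{h}(z) + 1$ to bring $f(h^{-1}(y))$ back into $[0, 1]$, also $K_{2} = 0$ on the second piece. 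Thus $F(y) = y + \log_{\sigma}\mu_{i}$ on each piece; since $\mu_{1} = \sigma \mu_{2}$ gives $\log_{\sigma}\mu_{2} + 1 = \log_{\sigma}\mu_{1}$, the two additive constants agree and $F$ is a single rotation of $S^{1}$. Rotation-number invariance then forces $F = R_{\rho(f)} = R_{\alpha}$.

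The main obstacle will be the careful handling of lifts on the second piece, where $f(h^{-1}(y))$ exits $[0, 1]$ and one must correctly apply the periodicity of $\widetilde{h}$ so that both $K_{2} = 0$ and the shift constants on the two pieces agree; both facts ultimately reduce to the single PL continuity relation above. A conceptually cleaner route avoiding this bookkeeping is to verify directly that $h'$ is the $f$-invariant probability density (a short calculation on each affine piece using $\sigma\mu_{2} = \mu_{1}$), whence $h$ pushes forward the $f$-invariant measure to Lebesgue and $F$ must preserve Lebesgue, hence is a rotation, necessarily $R_{\alpha}$.
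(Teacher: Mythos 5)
Your proposal is correct and follows essentially the same route as the paper: the identity $\pi(f)=\sigma_{f}(b')\neq 1$ via $\pi_{s}(f)=1$ for PL maps, the automatic ($D$)-property, and then the explicit verification that conjugating by $h_{\sigma}^{-1}$ (in coordinates centered at $b'$) turns each affine piece into a translation, which is exactly the ``one can check'' step the paper leaves implicit. The only difference is that you carry out that computation (and sketch an invariant-density alternative), whereas the paper omits it.
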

\smallskip

\begin{proof}
One has $\pi(f)= \sigma_{f}(b^{\prime})\neq 1$. Moreover $f$ satisfies the ($D$)-property.
One can check that
$h_{\sigma}^{-1}\circ \left( (R_{b^{\prime}})^{-1}\circ f\circ R_{b^{\prime}}\right)\circ h_{\sigma} = R_{\alpha}$ and therefore $h\circ f\circ h^{-1} = R_{\alpha}$.
\end{proof}

\section{\bf Class $P$-homeomorphisms without the (D)-property} 
\bigskip

{\it Proof of Proposition \ref{p:18}}. Suppose that there is a piecewise $C^{1}$-homeomorphism $h$ that conjugates $f$ to a diffeomorphism $F$:
$f= h^{-1}\circ F\circ h$. Since the rotation number is irrational, $h^{-1}$ is also piecewise $C^{1}$. 
As $h$ and $h^{-1}$ have the same number $p$ of break points and $f^{n}= h^{-1}\circ F^{n}\circ h$ then $f^{n}$ has at most
$2p$ break points for every $n\in \mathbb{Z}$. So by (\cite{AM2}, Proposition 2.5), $f$ satisfies
the (D)-property, a contradiction. \qed
\bigskip

{\it Proof of Corollary \ref{c:19}}. This follows directly from (\cite{A}, Main Theorem) since $\pi_{s}(F)=1$ for any
diffeomorphism $F$ of $S^{1}$. \qed
\bigskip

\bibliographystyle{amsplain}

\end{document}